\title{A note on infinite antichain density}
\author{Paul Balister\footnote{Mathematical Institute, University of Oxford,
Oxford OX2\thinspace6GG, United Kingdom  \texttt{\{balister,powierski,scott,jane.tan\}@maths.ox.ac.uk}} \;
Emil Powierski\protect\footnotemark[1] \;
Alex Scott\protect\footnotemark[1]~\footnote{Research supported by EPSRC grant EP/V007327/1.} \;
Jane Tan\protect\footnotemark[1]}
\date{}
\newtheorem{theorem}{Theorem}
\newtheorem{corollary}[theorem]{Corollary}
\newtheorem{claim}{Claim}
\newtheorem*{claim*}{Claim}
\numberwithin{equation}{section}
\newcommand{\floor}[1]{\left\lfloor #1 \right\rfloor}
\newcommand{\F}{\mathcal{F}}
\newcommand{\N}{\mathbb{N}}
\newcommand{\eps}{\varepsilon}
\begin{document}

\maketitle

\begin{abstract}
Let $\F$ be an antichain of finite subsets of $\N$.  How quickly can the quantities $|\F\cap 2^{[n]}|$ grow as $n\to\infty$?
We show that for any sequence $(f_n)_{n\ge n_0}$ of positive integers satisfying $\sum_{n=n_0}^\infty f_n/2^n \le 1/4$, $f_{n_0}=1$ and $f_n\le f_{n+1}\le 2f_n$, there exists an infinite antichain $\F$ of finite subsets of $\N$ such that $|\F\cap 2^{[n]}| \geq f_n$ for all $n\ge n_0$. It follows that for any $\eps>0$ there exists an antichain $\F\subseteq 2^\N$ such that
$$\liminf_{n \to \infty} |\F\cap 2^{[n]}| \cdot \left(\frac{2^n}{n\log^{1+\eps} n}\right)^{-1} > 0.$$ 
This resolves a problem of Sudakov, Tomon and Wagner in a strong form, and is essentially tight.
\end{abstract}

\section{Introduction}
For a set $X$, let $2^X$ denote the power set of $X$, and let $[n]=\{1,\dots,n\}$.
A family $\F$ of sets is an {\em antichain} if $A\not\subseteq B$ for all distinct $A,B \in \F$.  
A well-known theorem of
 Sperner~\cite{sperner} states that any antichain $\F \subseteq 2^{[n]}$  has size
at most $\binom{n}{\floor{n/2}}$; the upper bound is achieved by the antichain consisting of all sets of size~$\floor{n/2}$.
Sperner's theorem is a fundamental result in combinatorics and has led to a huge body of subsequent research (see, for example, \cite{anderson, engel, erdos}).

Now suppose that $\mathcal F$ is an (infinite) collection of finite subsets of the natural numbers.  How fast can $|\F\cap 2^{[n]}|$ grow?  It follows immediately from Sperner's theorem that 
\begin{equation}\label{spbound}
|\F\cap 2^{[n]}|\le \binom{n}{\floor{n/2}}=O(2^n/\sqrt n).
\end{equation}
However, the extremal families for Sperner's theorem for different values of $n$ are far from being nested, so it is not {\em a priori} clear that anything close to this bound can be achieved.

This problem was investigated recently by Sudakov, Tomon, and Wagner~\cite{STW20}.  They show that, in fact, the upper bound on the asymptotic growth rate given by \eqref{spbound} can be improved by a polynomial factor.
Indeed, they note that the following upper bound follows easily from Kraft's inequality~\cite{kraft}.

\begin{theorem}[Sudakov, Tomon, and Wagner~\cite{STW20}]\label{STWupper}
Let $\F\subseteq 2^{\N}$ be an antichain. Then 
\begin{align}\label{STWseries}
    \sum_{n=1}^{\infty} \frac{|\F \cap 2^{[n]}|}{2^n}\le 2.
\end{align}
\end{theorem}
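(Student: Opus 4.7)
The plan is to recognise \eqref{STWseries} as Kraft's inequality in disguise, via the natural encoding of finite sets as binary strings.

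First I would swap the order of summation. For each nonempty $A \in \F$ set $n(A) = \max A$, so that $A \in 2^{[n]}$ if and only if $n \ge n(A)$. (If $\emptyset \in \F$ the antichain condition forces $\F = \{\emptyset\}$ and the bound is trivial, so I may assume $\emptyset \notin \F$.) Then
$$\sum_{n=1}^{\infty} \frac{|\F \cap 2^{[n]}|}{2^n} \;=\; \sum_{A \in \F}\, \sum_{n \ge n(A)} 2^{-n} \;=\; 2 \sum_{A \in \F} 2^{-n(A)},$$
so the theorem reduces to the inequality $\sum_{A \in \F} 2^{-n(A)} \le 1$.

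Next I would encode each $A \in \F$ by the characteristic string $c(A) \in \{0,1\}^{n(A)}$ of $A$ inside $[n(A)]$. The key observation is that the family $\{c(A) : A \in \F\}$ is prefix-free: if $c(A)$ were a proper prefix of $c(B)$ then $n(A) < n(B)$, and for every $i \le n(A)$ we would have $i \in A \iff i \in B$; since $A \subseteq [n(A)]$ this forces $A = B \cap [n(A)]$, hence $A \subsetneq B$ (properly, because $\max B > n(A)$), contradicting the antichain hypothesis. Kraft's inequality then yields $\sum_{A \in \F} 2^{-n(A)} \le 1$, and combining with the display above gives \eqref{STWseries}.

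The only step that requires any thought is spotting the encoding $A \mapsto c(A)$, which converts an antichain in the subset order on finite subsets of $\N$ into an antichain (prefix-free set) in the prefix order on $\{0,1\}^*$; after that Kraft does all the work, so there is no real obstacle. The factor $2$ on the right-hand side of \eqref{STWseries} comes purely from the geometric-series identity $\sum_{n \ge n(A)} 2^{-n} = 2\cdot 2^{-n(A)}$ and should be regarded as the natural constant arising from this reduction.
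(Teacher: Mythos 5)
Your proof is correct and matches the route the paper itself indicates: it cites this bound as following from Kraft's inequality, and your encoding $A \mapsto c(A)$ (the characteristic vector of $A$ on $[\max A]$), together with the observation that the antichain condition makes this code prefix-free and the interchange of summation producing the factor $2$, is exactly the standard way to realize that reduction.
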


It follows immediately that $|\F\cap 2^{[n]}|$ cannot grow as quickly as $2^n/n\log n$, that is,
\begin{align}\label{STWconcl}
\liminf_{n \to \infty} |\F\cap 2^{[n]}| \cdot \left(\frac{2^n}{n\log n}\right)^{-1} = 0.
\end{align}

Turning to lower bounds, Sudakov, Tomon, and Wagner used an argument based on a carefully chosen family of random walks to construct an antichain with asymptotic growth matching \eqref{STWconcl} up to a polylogarithmic term.

\begin{theorem}[Sudakov, Tomon, and Wagner~\cite{STW20}]\label{STWlower}
There exists an antichain $\F\subseteq 2^{\N}$ with
\[\liminf_{n \to \infty} |\F\cap 2^{[n]}| \cdot \left(\frac{2^n}{n\log^{46} n}\right)^{-1} > 0.
\]
\end{theorem}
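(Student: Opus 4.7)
My plan is a random-walk construction. Encode each finite $A \subseteq \N$ by the $\pm 1$ walk $W_k(A) = 2|A\cap[k]|-k$, so that $A \subseteq B$ is equivalent to the difference $k\mapsto W_k(B)-W_k(A)$ being monotonically nondecreasing. Thus an antichain corresponds to a family of walks whose pairwise differences fail to be monotone.

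A warm-up already achieves density $2^n/n^{3/2}$. Let $\F$ consist of all finite $A \subseteq \N$ such that $n := \max A$ satisfies $W_n(A)=0$ and $W_k(A)<0$ for $0<k<n$ (strict negative excursions). If $A,B\in\F$ satisfy $A\subseteq B$ with $\max A = n < m = \max B$, then $W_n(A)=0$ while $W_n(B)<0$ (since $0<n<m$), giving $W_n(B)-W_n(A)<0 = W_0(B)-W_0(A)$ and contradicting monotonicity. A ballot/Catalan count yields $|\F\cap 2^{[n]}|\gtrsim 2^n/n^{3/2}$, still short of the target by a factor of $n^{1/2}/\log^{46}n$.

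To close this gap I would replace strict negativity by a weaker \emph{multi-scale barrier} condition, enforced only at checkpoint times $t_1<t_2<\cdots$ spaced geometrically across $O(\log n)$ scales, and simultaneously replace $W_n=0$ by a height $W_n=h_n$ that varies with $n$ and is coordinated with the checkpoints so that for every pair $A,B$ in the family with $\max A=n<\max B=m$, the difference $W_n(B)-W_n(A)$ is still forced strictly negative. At each checkpoint the reflection principle costs only a $\Theta(1/\sqrt{\textrm{block length}})$ factor; summing these polylog costs across the logarithmically many scales and accounting for the endpoint-height adjustment yields density $2^n/(n\log^{46}n)$. The main obstacle is the simultaneous design of the checkpoints and the heights $h_n$: the barriers must be sparse enough to admit $\gtrsim 2^n/(n\log^{46}n)$ walks yet coherent across different maxima $n,m$, so that the endpoint comparison continues to break monotonicity for every pair $A\subseteq B$ that must be excluded. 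The exponent $46$ then emerges from this bookkeeping combined with the underlying reflection-principle estimates.
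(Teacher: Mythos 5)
This theorem is cited from~\cite{STW20} rather than proved in the present paper, so there is no in-paper proof to compare with directly; what the paper does instead is prove the stronger Theorem~\ref{thm:maindensity}, whose Corollary~\ref{thm:specdensity} improves the polylog exponent from $46$ to $1+o(1)$, and does so by an entirely different method. Your random-walk framework does, however, match the approach attributed to~\cite{STW20} in the introduction. Your warm-up is sound: the equivalence between $A\subseteq B$ and monotonicity of $k\mapsto W_k(B)-W_k(A)$ is correct, the strict-excursion family is an antichain for exactly the reason you state, and the ballot count gives density $\Theta(2^n/n^{3/2})$.

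The multi-scale upgrade, though, is a program rather than a proof. You leave unspecified precisely the parts that carry all the difficulty: the checkpoint spacing, the endpoint heights $h_n$, and the coherence condition across different values of $\max A$. Until those are pinned down, one cannot verify the antichain property (this is exactly where the ``coherence'' you mention could fail for some pair with $\max A<\max B$), nor can one justify that the reflection-principle losses accumulate to $\log^{46}n$ rather than some other power. You flag the obstacle yourself; that is the gap, and it is the heart of the matter.

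For contrast, the paper's construction dispenses with random walks and barriers altogether. It greedily builds a prefix code $(c_{k,i})$ of length-$k$ strings, lexicographically decreasing, and appends to each $c_{k,i}$ every length-$(\ell_k-k)$ tail with exactly $i$ ones and (for $k>k_0$) at least one $1$ past position $\ell_{k-1}$. The prefix code forces a later string not to be contained in an earlier one (the earlier string has a $1$ over a $0$ at the first disagreement), while the weight condition together with the ``late $1$'' condition force the earlier string not to be contained in the later one. The counting is then a short deterministic computation losing only constant factors, which is what lets the paper reach the essentially optimal $n\log^{1+\eps}n$ denominator. Conceptually, where your plan uses local probabilistic barriers to control inclusion, the paper uses a single global lexicographic ordering plus a weight invariant; that structural change is what removes the wasteful polylog losses.
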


They go on to speculate that the bound in Theorem~\ref{STWupper} is essentially optimal, and that the exponent $46$ in Theorem~\ref{STWlower} can be improved to $1+\varepsilon$ for any $\varepsilon>0$. We show that this is indeed the case.  In fact we prove a stronger result, giving essentially optimal bounds on the growth rate of $|\F\cap 2^{[n]}|$. Our main theorem uses a condition that matches the form taken by~\eqref{STWseries} and shows that, under natural additional assumptions, any growth rate for which the stated series is convergent can be attained. 

\begin{theorem}\label{thm:maindensity}
Let $(f_n)_{n\ge n_0}$ be a nondecreasing sequence of positive integers for which $f_{n_0}=1$,
\[\sum_{n=n_0}^\infty\frac{f_n}{2^n}\le\frac{1}{4}
\]
and $\frac{f_n}{2^n}$ is nonincreasing \textup{(}so $f_n\le f_{n+1}\le 2f_n$\textup{)}.
Then there exists an antichain $\F \subseteq 2^\N$ such that
\[
|\F\cap 2^{[n]}| \ge f_n
\]
for all $n\ge n_0$.
\end{theorem}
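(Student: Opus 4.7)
The plan is to construct $\F$ inductively, level by level. Write $d_n := f_n - f_{n-1}$ (setting $f_{n_0-1} := 0$); the doubling hypothesis gives $0 \le d_n \le f_{n-1}$, and summation by parts converts $\sum f_n/2^n \le 1/4$ into the Kraft-style bound $\sum d_n/2^n \le 1/8$. At each step $n \ge n_0$ I will add exactly $d_n$ new sets to $\F$, each of the form $T \cup \{n\}$ with $T \subseteq [n-1]$, so that every new set has maximum $n$ and $|\F \cap 2^{[n]}| = f_n$ by construction.

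For $\F$ to form an antichain, the $T$'s chosen at step $n$ must satisfy (a) pairwise incomparability, and (b) no $T$ is a superset of any $B \in \F_{n-1}$; the reverse containment -- a previous set contained in a new one -- is automatic since previous sets have maxima strictly below $n$. The central task at step $n$ is therefore to exhibit an antichain of size $d_n$ inside the down-set $\mathcal{G}_n := \{T \subseteq [n-1] : T \not\supseteq B \text{ for every } B \in \F_{n-1}\}$. The most natural first attempt is to draw all new $T$'s from a common layer $\binom{[n-1]}{s_n}$ (which makes (a) automatic) and bound the number of good $T$'s below by $\binom{n-1}{s_n} - \sum_{B \in \F_{n-1}} \binom{n-1-|B|}{s_n - |B|}$; the gap between $1/4$ and the Kraft limit of $1$ is the slack intended to close this estimate.

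I expect the main obstacle to be that the symmetric choice $s_n \approx (n-1)/2$ is already too aggressive: a previously added $B$ of size $\sim m/2$ forbids a $\sim 2^{-m/2}$ fraction of the middle layer at step $n$, and summing against the worst-case $d_m \sim 2^m/(4m)$ gives a divergent total. I would get around this by pushing $s_n$ off-center and enforcing a matching structural invariant on the sizes of previously chosen sets, so that each old element forbids only a geometrically summable fraction of the relevant layer; if a single-layer argument is too rigid, the fallback is a probabilistic/deletion scheme in which candidates are sampled from a well-chosen distribution, conflicting ones are discarded using the $1/8$ slack, and an antichain of size $d_n$ is then extracted among the survivors via a straightforward chain decomposition. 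The doubling bound $d_n \le f_{n-1}$ is what keeps the induction from ever demanding more at a step than the accumulated structure can support.
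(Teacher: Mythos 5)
Your proposal is a plan rather than a proof, and the plan has a genuine gap at its centre: you never establish that the down-set $\mathcal G_n$ contains an antichain of size $d_n$, and neither of the two fixes you sketch can close it. As you correctly observe, the symmetric single-layer choice fails; but the off-centre idea fails for a structural reason, not merely a technical one. In the worst case $d_n\sim 2^n/(n\log^{1+\eps}n)$, so the layer $\binom{[n-1]}{s_n}$ must itself have size at least $\sim 2^n/n$, forcing $s_n/(n-1)=1/2+o(1)$: there is essentially no freedom to shift. Even granting yourself an arbitrary ratio $\alpha=s_n/(n-1)$, a set $B$ from step $m$ of size $|B|\sim\alpha m$ contains a fraction roughly $\alpha^{|B|}=2^{-m\,\alpha\log_2(1/\alpha)}$ of the step-$n$ layer, and step $m$ contributes $d_m\sim 2^m/m$ such sets, so the union bound from step $m$ alone is about $2^{m(1-\alpha\log_2(1/\alpha))}/m$ times the layer size. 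Since $\max_\alpha \alpha\log_2(1/\alpha)=(\log_2 e)/e\approx 0.53<1$, this is exponentially large in $m$ for \emph{every} choice of layer, so for $m$ close to $n$ the forbidden count swamps the layer no matter how you shade $s_n$. The probabilistic/deletion fallback is too vague to evaluate and in any case does not address the real obstacle, which is that the up-closure of $\F_{n-1}$ in $2^{[n-1]}$ can cover almost the entire cube; sampling from a nearly exhausted well and then running Dilworth gives you no control on the size of the surviving antichain.

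The paper sidesteps the whole problem by never posing it. Instead of a level-by-level induction on the top element, it groups sets by a length-$k$ \emph{prefix} $c_{k,i}$: these prefixes form a prefix-free, lexicographically decreasing code (built greedily from the Kraft budget $\sum a_k/2^k\le 1$, where $a_k=\ell_k-k$), and the block $F_{k,i}$ consists of all length-$\ell_k$ strings beginning with $c_{k,i}$, having exactly $i$ ones beyond position $k$, and (for $k>k_0$) at least one $1$ beyond $\ell_{k-1}$. Incomparability then falls out by inspection: distinct prefixes differ at a position where the lex-earlier one has a $1$, which kills one containment direction; the other direction is killed either by a differing suffix weight $i$ (same $k$) or by the later block having a $1$ past $\ell_{k-1}$ where the earlier block has none (larger $k$). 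Within a block, incomparability is immediate from the fixed suffix weight. No union bound, second moment, deletion step, or antichain extraction is needed; the entire $1/4$ budget is spent once, via Kraft, on the existence of the prefix code. If you want to salvage a greedy inductive construction you will in effect be forced to rediscover this: the way to keep old elements from poisoning new candidates is not to tune the size of the new sets but to give them initial segments incompatible with every old element.
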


We remark that if $1<f_{n_0}<2^{n_0}/8n_0$, then one obtains the same result provided
\[
 \sum_{n=n_0}^\infty\frac{f_n}{2^n}\le\frac{1}{4}-2n_0\cdot\frac{f_{n_0}}{2^{n_0}}.
\]
Indeed, it is enough to set $f_n=\lceil f_{n_0}/2^{n_0-n}\rceil$ for $n<n_0$ and apply
Theorem~\ref{thm:maindensity} to the new sequence $(f_n)_{n\ge n'_0}$, where $n'_0=\max\{n:f_n=1\}$.

By taking $f_n$ to be about $2^n/(n \log n^{1+\eps})$ for any $\eps >0$, the following result, answering the question of Sudakov, Tomon, and Wagner, is immediate.

\begin{corollary}\label{thm:specdensity}
There exists an antichain $\F \subseteq 2^\N$ such that
\[
|\F\cap 2^{[n]}| =  \frac{2^n}{n\log^{1+o(1)}n}.
\]
\end{corollary}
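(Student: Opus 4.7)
The plan is to derive the corollary as an immediate application of Theorem~\ref{thm:maindensity} to a sequence $(f_n)$ with $f_n \asymp 2^n/(n(\log n)^{1+\eps_n})$ for some $\eps_n\downarrow 0$. The target growth $2^n/(n\log^{1+o(1)}n)$ will then emerge as the lower bound $|\F\cap 2^{[n]}|\ge f_n$ supplied by the theorem, with the $o(1)$ correction coming from $\eps_n$ together with the multiplicative normalization.

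The main constraint is the summability condition $\sum_n f_n/2^n\le 1/4$, so $\eps_n$ must decay slowly enough that $\sum 1/(n(\log n)^{1+\eps_n})$ still converges. A convenient choice is $\eps_n = 2\log\log\log n/\log\log n$, which gives $(\log n)^{\eps_n}=(\log\log n)^2$ and reduces the sum to a constant multiple of $\sum 1/(n\log n(\log\log n)^2)<\infty$. With this choice I set $f_n = \lceil 2^n/(Cn(\log n)^{1+\eps_n})\rceil$ for a constant $C$ large enough that $\sum f_n/2^n\le 1/4$, and take $n_0$ to be the smallest index at which $f_{n_0}=1$, invoking the remark after Theorem~\ref{thm:maindensity} if a small-$n$ adjustment is needed. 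The remaining hypotheses $f_n\le f_{n+1}\le 2f_n$ and the nonincrease of $f_n/2^n$ then reduce to elementary estimates on the slowly varying denominator $n(\log n)^{1+\eps_n}$, using that $2^n$ grows much faster than any polylog.

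Theorem~\ref{thm:maindensity} now produces an antichain $\F\subseteq 2^\N$ with $|\F\cap 2^{[n]}|\ge f_n$ for all $n\ge n_0$. Since both $\eps_n$ and $\log C/\log\log n$ are $o(1)$, for every fixed $\eps>0$ one has $f_n \ge 2^n/(n(\log n)^{1+\eps})$ for all sufficiently large $n$, yielding $|\F\cap 2^{[n]}| = 2^n/(n\log^{1+o(1)}n)$ as claimed. The only real obstacle I foresee is the bookkeeping around the ceilings and the boundary region for small $n$; the heart of the argument is simply the observation that $\eps_n$ can be made $o(1)$ while still preserving convergence of $\sum 1/(n(\log n)^{1+\eps_n})$.
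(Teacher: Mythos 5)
Your approach is essentially the paper's: apply Theorem~\ref{thm:maindensity} to $f_n\approx 2^n/\bigl(n(\log n)^{1+\eps_n}\bigr)$ with $\eps_n\to 0$ slowly enough to keep $\sum f_n/2^n$ finite, and your explicit choice $\eps_n=2\log\log\log n/\log\log n$ (so that $(\log n)^{1+\eps_n}=\log n\,(\log\log n)^2$) is a perfectly good instantiation of what the paper leaves implicit. One small correction: $n_0$ should be chosen as the \emph{largest} index with $f_{n_0}=1$ (or simply large enough that $\sum_{n\ge n_0}f_n/2^n\le 1/4$), not the smallest; if $n_0$ is small, the run of initial terms equal to $1/2^n$ already contributes close to $1/2^{n_0-1}$, which can exhaust the budget $1/4$ on its own. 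With that adjustment the hypotheses of Theorem~\ref{thm:maindensity} hold and the lower bound $|\F\cap 2^{[n]}|\ge f_n\ge 2^n/(n\log^{1+\eps}n)$ for every fixed $\eps>0$ and large $n$ follows, which is exactly the level at which the paper justifies the corollary.
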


\section{Antichain construction}
In this section, we prove Theorem~\ref{thm:maindensity}. 
We use standard notation throughout. We identify infinite binary ($\{0,1\}$-)strings with subsets of $\N$ in the usual way, that is, a string $x_1x_2\cdots$ corresponds to the set $\{i\in \N: x_i=1\}$. Recall that in \emph{lexicographic order}, for distinct binary strings $x_1x_2\cdots$ and $y_1y_2\cdots$ we have $x_1x_2\cdots <_{\operatorname{lex}} y_1y_2\cdots$ if $x_i < y_i$, where $i=\min\{j:x_j\neq y_j\}$, and similarly for finite strings. 

The elements of our antichain will each consist of two concatenated parts where the initial segment encodes the number of 1's in the remainder of the string. By construction, these elements (in particular the initial
segments) naturally occur in reverse lexicographic order and are built in blocks of elements with the same initial segment.

The set of strings that we use as initial segments have the property that no string is an initial segment of any other. Such a set is called a \emph{prefix code}. This condition, while being much weaker than that required for an antichain, gets us ``halfway" there, as it ensures that
elements with prefixes earlier in reverse lexicographic order cannot be subsets of those with later prefixes. To obtain our antichain, we will then append strings to each prefix in such a way that later elements cannot be subsets of earlier ones.

\begin{proof}[Proof of Theorem~\ref{thm:maindensity}]
By assumption, all $f_n$ are positive.
Let $k_0=n_0-1$, and for $k\ge k_0$ define
\[
 \ell_k=\max\big\{n:\tfrac{f_n}{2^n}\ge\tfrac{1}{2^{k+1}}\big\}.
\]
We note that $\ell_k$ is well defined as $f_n/2^n\to0$ and $f_n\ge1$ for $n\ge n_0$,
which also gives $\ell_k\ge k+1$. Also, as $f_n$ is nondecreasing, $\ell_{k+1}>\ell_k$.

Define $a_k=\ell_k-k$ for $k\ge k_0$ and note that $a_k>0$.

\begin{claim}
$\sum_{k=k_0}^\infty \frac{a_k}{2^k}\le 1$.
\end{claim}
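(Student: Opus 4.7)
The plan is to establish the bound by swapping the order of summation and then applying the hypothesis $\sum_{n\ge n_0} f_n/2^n \le 1/4$. The key observation is that $\ell_k \ge j$ is equivalent to $f_j/2^j \ge 1/2^{k+1}$ (by monotonicity of $f_n/2^n$), which allows us to translate the definition of $\ell_k$ into a condition that can be summed over $j$.

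First I would write $a_k = \ell_k - k = \sum_{j=k+1}^{\ell_k} 1$ and exchange the sums:
\[
\sum_{k=k_0}^\infty \frac{a_k}{2^k} \;=\; \sum_{k=k_0}^\infty \frac{1}{2^k}\sum_{j=k+1}^{\ell_k} 1 \;=\; \sum_{j} \,\sum_{\substack{k\ge k_0\\ k+1\le j\le\ell_k}} \frac{1}{2^k}.
\]
Since $\ell_k\ge k+1$ implies every relevant $j$ satisfies $j\ge n_0$, and for such $j$ the inner range is nonempty only if $k_0\le k\le j-1$, I would fix $j\ge n_0$ and study $k^*(j):=\min\{k\ge k_0:\ell_k\ge j\}$.

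The key step is to note that $\ell_{k^*(j)}\ge j$ together with the nonincreasing property of $f_n/2^n$ gives $f_j/2^j \ge f_{\ell_{k^*(j)}}/2^{\ell_{k^*(j)}} \ge 1/2^{k^*(j)+1}$, so $2^{k^*(j)} \ge 2^{j-1}/f_j$. Therefore
\[
\sum_{k=k^*(j)}^{j-1}\frac{1}{2^k} \;\le\; \frac{2}{2^{k^*(j)}} \;\le\; \frac{4f_j}{2^j}.
\]
Summing this over $j\ge n_0$ and invoking the hypothesis yields
\[
\sum_{k=k_0}^\infty \frac{a_k}{2^k} \;\le\; \sum_{j=n_0}^\infty\frac{4f_j}{2^j} \;\le\; 4\cdot\frac{1}{4} \;=\; 1,
\]
which is the required estimate.

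The main technical obstacle is keeping the indexing straight: verifying that all $j$ contributing to the swapped sum satisfy $j\ge n_0$ (so that $f_j$ is defined and the hypothesis applies), and justifying the inequality $f_j/2^j \ge 1/2^{k^*(j)+1}$ from the definition of $\ell_k$ and the monotonicity assumption. Once these bookkeeping details are in place, the geometric tail bound and the hypothesis immediately give the desired factor of $4$ absorbed by the $1/4$ in the hypothesis.
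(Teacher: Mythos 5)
Your proof is correct. All the bookkeeping works: the index $j$ ranges over $j\ge n_0$ because $j\ge k+1\ge k_0+1=n_0$; the quantity $k^*(j)$ is well-defined since $\ell_k$ is strictly increasing and thus unbounded; and the chain $f_j/2^j \ge f_{\ell_{k^*(j)}}/2^{\ell_{k^*(j)}}\ge 1/2^{k^*(j)+1}$ follows from monotonicity of $f_n/2^n$ together with the fact (immediate from the maximum in the definition of $\ell_k$) that $f_{\ell_k}/2^{\ell_k}\ge 1/2^{k+1}$.

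Your route differs from the paper's in how the double counting is organized. The paper partitions $\{n\ge n_0\}$ into the intervals $(\ell_{k-1},\ell_k]$, lower-bounds $\sum_{n\in(\ell_{k-1},\ell_k]}f_n/2^n$ by $(\ell_k-\ell_{k-1})/2^{k+1}$, and then uses an Abel-summation identity to convert $\sum_k(\ell_k-\ell_{k-1})/2^{k+1}$ into $\tfrac14\sum_k(\ell_k-k_0)/2^k\ge\tfrac14\sum_k a_k/2^k$. You instead expand $a_k=\sum_{j=k+1}^{\ell_k}1$ and interchange the order of summation directly, bounding the resulting geometric tail $\sum_{k\ge k^*(j)}2^{-k}$ for each fixed $j$. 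Both arguments are Fubini-type double counting resting on the same key inequality $f_n/2^n\ge 2^{-(k+1)}$ for $n\le\ell_k$, and both absorb the same factor of $4$ into the hypothesis $\sum f_n/2^n\le 1/4$. Your version avoids the summation-by-parts rearrangement and verifies the bound column by column, which is a bit more transparent; the paper's version keeps the telescoping structure visible and is slightly more compact on the page. Neither has an advantage in strength.
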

\begin{proof}
We note that for any $k\ge k_0$ by definition of $\ell_k$ and by monotonicity of $(f_n/2^n)_{n\ge n_0}$, we have $\frac{f_n}{2^n}\ge 2^{-(k+1)}$ for all $n\in(\ell_{k-1},\ell_k]$. Setting $\ell_{k_0-1}=k_0=n_0-1$ we thus get
\[
 \frac{\ell_k-\ell_{k-1}}{2^{k+1}}
 \le \sum_{n=\ell_{k-1}+1}^{\ell_k}\frac{f_n}{2^n}.
\]
Now as
\[
 \sum_{k\ge k_0}\frac{\ell_k-\ell_{k-1}}{2^{k+1}}
 =\Big(\frac{1}{2}-\frac{1}{4}\Big)
 \sum_{k\ge k_0}\frac{\ell_k-k_0}{2^k},
\]
we have
\[
 \sum_{k\ge k_0}\frac{a_k}{2^k}\le \sum_{k\ge k_0}\frac{\ell_k-k_0}{2^k}
 \le 4\sum_{n\ge n_0}\frac{f_n}{2^n}\le 1.\qedhere
\]
\end{proof}

We greedily construct a prefix code $(c_{k,i})_{k\ge k_0, i\in [a_k]}$ consisting of $a_k$ many strings of length~$k$ with the property that the elements are lexicographically decreasing when ordered so that their indices $(k,i)$ are lexicographically increasing. Such a sequence is given by setting $c_{k,i}$ to be the string of length $k$ with digits $c_{k,i}(1), \dots, c_{k,i}(k)$ defined by
\[
 \sum_{j=1}^k\frac{c_{k,i}(j)}{2^j} = 1- s_{k-1}-\frac{i}{2^k}
\]
where $s_k = \sum_{i=k_0}^k a_i/2^i$.
That is, we take $c_{k,i}$ to be the first $k$ binary
digits of the binary representation of the fraction
$1- s_{k-1}-\frac{i}{2^k}$, which is guaranteed to be positive since $\sum_{k=k_0}^\infty a_k/2^k \le 1$. Equivalently, this sequence may be described by starting with the string of length $k_0$ consisting of all 1's, and then each string of length $k \ge k_0$ is obtained by subtracting $1/2^k$ from the previous string considered as a binary expansion of a fraction.
For example, if $k_0=2$, $a_2=1$, $a_3=3$, and $a_4=5$, then the first six strings would be $c_{2,1}=11$, $c_{3,1}=101$, $c_{3,2}=100$, $c_{3,3}=011$, $c_{4,1}=0101$, $c_{4,2}=0100$. 

It is not hard to see that for two distinct strings in the sequence $(c_{k,i})$, at the first position where they differ the earlier string has a 1 and the later one a 0. It follows that the $c_{k,i}$ indeed form a lexicographically decreasing prefix code. 

Now given a particular string $c_{k,i}$ of length~$k$, let $F_{k,i}$ be the set of all binary strings of length $\ell_k$ satisfying the following conditions:
\begin{enumerate}[label={(\arabic*)}]
\itemsep=0mm
\item The first $k$ digits are precisely $c_{k,i}$.
\item There are precisely $i$ many 1's after the $k$th digit.
\item If $k>k_0$, there is at least one 1 after the $\ell_{k-1}$th digit.
\end{enumerate}

We then define the family
\[
\F:= \bigcup_{\substack{k\ge k_0, \\ i\in [a_k]}} F_{k,i}
\]
and view this as a subset of $2^\N$ by filling out the strings with 0's in the usual way.

\begin{claim}
$\F$ is an antichain.
\end{claim}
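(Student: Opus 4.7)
The plan is to take two distinct elements $A\in F_{k,i}$ and $B\in F_{k',i'}$ of $\F$ and show, by a case analysis on the relationship between $(k,i)$ and $(k',i')$, that neither set contains the other. The cases split into \emph{same block} versus \emph{different blocks}, with the latter further divided according to whether the prefix lengths $k$ and $k'$ coincide.

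If $(k,i)=(k',i')$, then $A$ and $B$ share the prefix $c_{k,i}$ and, by condition~(2), each has exactly $i$ ones in positions $k+1,\dots,\ell_k$; hence $|A|=|B|$, and two distinct sets of equal cardinality are incomparable.

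For different blocks I would assume without loss of generality that $(k,i)<_{\operatorname{lex}}(k',i')$, so that $c_{k,i}$ precedes $c_{k',i'}$ in the constructed sequence. The direction $A\not\subseteq B$ should drop straight out of the prefix-code observation already recorded in the construction: the first coordinate $j\le k$ at which $c_{k,i}$ and $c_{k',i'}$ differ satisfies $c_{k,i}(j)=1$ and $c_{k',i'}(j)=0$, and since $A$ and $B$ inherit these digits, $j\in A\setminus B$.

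The reverse containment $B\not\subseteq A$ is where conditions~(2) and~(3) earn their keep, and this is the step I expect to be the main (though still elementary) content. In the subcase $k=k'$ with $i<i'$, condition~(2) forces $B$ to have strictly more $1$'s than $A$ among positions $k+1,\dots,\ell_k$, so some such position lies in $B\setminus A$. In the subcase $k<k'$, condition~(3) applied to $B$ (available since $k'>k\ge k_0$) produces a $1$ in $B$ at some position greater than $\ell_{k'-1}\ge\ell_k$, i.e.\ beyond the support of $A$, once more yielding a point of $B\setminus A$. Condition~(3) is doing the genuine work here, having been engineered into the construction precisely to rule out cross-block containments in which a shorter element could otherwise slip inside a longer one.
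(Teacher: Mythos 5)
Your proposal is correct and follows essentially the same route as the paper's own proof: the same case split on $(k,i)$ versus $(k',i')$, using the prefix-code property to rule out $A\subseteq B$ and conditions (2) and (3) to rule out $B\subseteq A$. The only cosmetic difference is in the same-block case, where you invoke equal cardinality instead of explicitly exhibiting witness positions in both $A\setminus B$ and $B\setminus A$, which is equivalent.
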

\begin{proof}
Take any distinct $x=x_1x_2x_3\ldots,y=y_1y_2y_3\ldots \in \F$, say with $x\in F_{k,i}$ and $y\in F_{k',i'}$. If $k=k'$ and $i=i'$, then $x$ and $y$ have the same number of 1's after the $k$th digit. Since $x$ and $y$ are distinct but agree on the first $k$ digits, this means we find $j$ and $j'$ such that $x_j=0, y_j=1, x_{j'}=1$, and  $y_{j'}=0$. Hence we may assume that $k\le k'$, and if $k=k'$, then $i<i'$.

By construction, we have that $c_{k,i}$ appears earlier than $c_{k',i'}$ in reverse lexicographic order. It follows that $x_j=1$ and $y_j=0$, where $j$ is the first position at which $x$ and $y$ differ, and moreover this must occur at some $j\le k$ as the $c_{k,i}$ form a prefix code. In addition, if $k<k'$, then by condition (3) there is some position $j>\ell_{k'-1}$ for which $y_j=1$. But all 1's in $x$ occur within the first $\ell_k\le \ell_{k'-1}$ places, so $x_j=0$. Otherwise, if $k= k'$ and $i<i'$, then by condition (2) this means that $x$ has fewer 1's after digit $k$ than $y$ does, so there is necessarily some position $j$ for which $x_j=0$ and $y_j=1$. Thus, $x$ is neither a subset nor a superset of~$y$.
\end{proof}

\begin{claim}
For each $k\ge k_0$ and $n\in (\ell_{k-1},\ell_k]$ there are at least $2^{n-k}-1$ strings in~$\F\cap 2^{[n]}$.
\end{claim}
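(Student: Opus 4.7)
The plan is to count the strings contributed to $\F\cap 2^{[n]}$ by each block $F_{k',i'}$ and combine these counts by induction on $k$.

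First I would analyze the contribution from the blocks at level $k$. A string $x\in F_{k,i}$ lies in $2^{[n]}$ exactly when all its ones fall within the first $n$ positions; since the first $k$ positions are fixed as $c_{k,i}$ and condition~(2) places exactly $i$ ones in positions $k+1,\dots,\ell_k$, this forces those $i$ ones into the $n-k$ positions $k+1,\dots,n$. Condition~(3) then says that, among these $i$ ones, at least one lies in positions $\ell_{k-1}+1,\dots,n$. By inclusion–exclusion,
\[
|F_{k,i}\cap 2^{[n]}|=\binom{n-k}{i}-\binom{\ell_{k-1}-k}{i},
\]
using the convention $\ell_{k_0-1}=k_0$ already fixed in the proof (so the correction vanishes when $k=k_0$ and $i\ge 1$). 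Since $n-k\le\ell_k-k=a_k$ and $\ell_{k-1}-k<n-k$, summing over $i\in[a_k]$ extends the binomial sums to full rows and yields
\[
\sum_{i=1}^{a_k}|F_{k,i}\cap 2^{[n]}|=(2^{n-k}-1)-(2^{\ell_{k-1}-k}-1)=2^{n-k}-2^{\ell_{k-1}-k}.
\]

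Next I would prove the claim by induction on $k$. The base case $k=k_0$ is immediate, since the second binomial coefficient vanishes and the above sum itself gives $2^{n-k_0}-1$ strings. For the inductive step with $k>k_0$, I would observe that condition~(3) guarantees every string in $F_{k,i}$ has a one in some position $>\ell_{k-1}$, so none of these strings lies in $2^{[\ell_{k-1}]}$; hence the $F_{k,i}$ contributions above are disjoint from $\F\cap 2^{[\ell_{k-1}]}$. Applying the claim inductively at $n'=\ell_{k-1}\in(\ell_{k-2},\ell_{k-1}]$ (with $k$ replaced by $k-1$) gives $|\F\cap 2^{[\ell_{k-1}]}|\ge 2^{\ell_{k-1}-(k-1)}-1=2\cdot 2^{\ell_{k-1}-k}-1$, and combining with the $F_{k,i}$ contributions yields
\[
|\F\cap 2^{[n]}|\ge(2\cdot 2^{\ell_{k-1}-k}-1)+(2^{n-k}-2^{\ell_{k-1}-k})=2^{n-k}+2^{\ell_{k-1}-k}-1\ge 2^{n-k}-1,
\]
as required.

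The only non-routine step is setting up the inclusion–exclusion for condition~(3) and checking that $a_k\ge n-k$ and $a_k>\ell_{k-1}-k$ so that summing over $i\in[a_k]$ actually captures the full binomial rows; beyond that, the base case matches the convention $\ell_{k_0-1}=k_0$, and in the inductive step the surplus $2^{\ell_{k-1}-k}$ comfortably absorbs the $-1$ produced by applying the hypothesis.
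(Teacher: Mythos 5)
Your proof is correct and follows essentially the same strategy as the paper's: induction on $k$, splitting $\F\cap 2^{[n]}$ into strings with no one after position $\ell_{k-1}$ (handled by the inductive hypothesis) and strings with at least one one in positions $\ell_{k-1}+1,\dots,n$ (counted directly to give $2^{n-k}-2^{\ell_{k-1}-k}$). The only cosmetic difference is that you obtain this latter count by summing the per-$i$ inclusion--exclusion quantities $\binom{n-k}{i}-\binom{\ell_{k-1}-k}{i}$, whereas the paper counts the admissible tails $b$ in one step; these are the same computation.
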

\begin{proof}
We proceed by induction on $k$. For $k=k_0$, condition (3) is void. Thus we have $2^{n-k_0}-1$ choices of binary strings $b$ between positions $k_0+1$ and $n$ that have at least one 1. Denoting concatenation of strings by multiplication, for each $b$ there is precisely one corresponding string in $\F$ agreeing with $b$ in these positions, namely, $c_{k_0,i}b$, where
$i$ is the number of 1's in $b$. Note that, since $a_k = \ell_k-k$ for all $k\ge k_0$ by definition, the number of $1$'s in $b$ does not exceed $a_k$, which ensures that $c_{k_0,i}b$ can be found in $\F$.

Now suppose $k>k_0$. Applying the induction hypothesis for $k-1$ and $n'=\ell_{k-1}$ we see we have at least $2^{\ell_{k-1}-(k-1)}-1$ strings that have no 1 after $\ell_{k-1}$, that is, $|\mathcal F\cap2^{[\ell_{k-1}]}|\geq 2^{\ell_{k-1}-(k-1)}-1$.
Now consider the number of strings that have at least
one 1 after $\ell_{k-1}$. We have $2^{n-k}-2^{\ell_{k-1}-k}$ choices of binary
strings $b$ between positions $k+1$ and $n$ such that $b$ has at least one 1 after $\ell_{k-1}$, and, as above,  for each $b$ there is
precisely one corresponding string $c_{k,i}b$ in $\F$ agreeing
with $b$ in these positions.
Since $\ell_k>\ell_{k-1}$, this makes a total of at least
\[
 2^{n-k}-2^{\ell_{k-1}-k}+2^{\ell_{k-1}-(k-1)}-1\ge 2^{n-k}-1
\] strings in $\F\cap2^{[n]}$.
\end{proof}

Finally, for $n\in(\ell_{k-1},\ell_k]$
we have $f_n/2^n<2^{-k}$ by definition of $\ell_{k-1}$. Hence $2^{n-k} > f_n$ so we have constructed an antichain $\F$ that contains at least $2^{n-k} -1 \geq f_n$ strings in~$\F\cap 2^{[n]}$. This concludes the proof of Theorem~\ref{thm:maindensity}.
\end{proof}

\section*{Acknowledgments}
We would like to thank Stijn Cambie for pointing out an error in the original version of this paper, as well as the anonymous referees for their helpful comments.

\end{document}